\documentclass[final,11pt]{amsart}
\usepackage{amsmath,amsfonts,amssymb,amscd,verbatim,comment}
\usepackage[leqno]{amsmath}
\usepackage{fullpage}
\usepackage{color}
\usepackage{array}
\usepackage{enumerate}
\usepackage{xspace}
\usepackage{bm}

\newcommand{\ba}{\mathbf{a}}
\newcommand{\bb}{\mathbf{b}}
\newcommand{\bc}{\mathbf{c}}
\newcommand{\bd}{\mathbf{d}}
\newcommand{\be}{\mathbf{e}}
\newcommand{\bx}{\mathbf{x}}
\newcommand{\by}{\mathbf{y}}
\newcommand{\bz}{\mathbf{z}}
\newcommand{\bw}{\mathbf{w}}
\newcommand{\bzero}{\mathbf{0}}

\newcommand{\Rplus}{\protect\hspace{-.1em}\protect\raisebox{.35ex}{\smaller{\smaller\textbf{+}}}}
\newcommand{\Cpp}{\mbox{C\Rplus\Rplus}\xspace}

\makeatletter
\newcommand{\leqnomode}{\tagsleft@true}
\newcommand{\reqnomode}{\tagsleft@false}
\makeatother

\numberwithin{equation}{section}
\theoremstyle{plain}
 
\newtheorem{theorem}[equation]{Theorem} 

\newtheorem{corollary}[equation]{Corollary} 
\newtheorem{lemma}[equation]{Lemma}
\newtheorem{proposition}[equation]{Proposition}
\newtheorem*{claim}{Claim}

\theoremstyle{definition}
\newtheorem{definition}[equation]{Definition}

\newtheorem{question}[equation]{Question}
\newtheorem{remark}[equation]{Remark}

\newcommand\FF{\mathbb F}
\newcommand\NN{\mathbb N}

\title{Sidon sets and 2-caps in $\mathbb{F}_3^n$}

\author[Huang]{Yixuan Huang}
\address{Wake Forest University, Department of Mathematics and Statistics, Winston-Salem, NC 27109} 
\email{huany16@wfu.edu}

\author[Tait]{Michael Tait}
\address{Carnegie Mellon University, Department of Mathematical Sciences, Pittsburgh, PA 15213} 
\email{mtait@cmu.edu}

\author[Won]{Robert Won}
\address{University of Washington, Department of Mathematics, Seattle, WA 98195} 
\email{robwon@uw.edu}

\subjclass[2010]{05B10, 05B25, 05B40, 51E15}

\begin{document}

\begin{abstract} 
For each natural number $d$, we introduce the concept of a $d$-cap in $\mathbb{F}_3^n$. A set of points in $\mathbb{F}_3^n$ is called a $d$-cap if, for each $k = 1, 2, \dots, d$, no $k+2$ of the points lie on a $k$-dimensional flat. This generalizes the notion of a cap in $\mathbb{F}_3^n$. We prove that the $2$-caps in $\mathbb{F}_3^n$ are exactly the Sidon sets in $\mathbb{F}_3^n$ and study the problem of determining the size of the largest $2$-cap in $\mathbb{F}_3^n$.
\end{abstract}

\maketitle

\section{Introduction}

Throughout, let $\FF_q$ denote the field with $q$ elements and let $\FF_q^n$ denote $n$-dimensional affine space over $\FF_q$. A \emph{cap} in $\FF_3^n$ is a collection of points such that no three are collinear.
Although this definition is geometric, there is an equivalent definition that is arithmetic: a set of points $C$ is a cap in $\FF_3^n$ if and only if $C$ contains no three-term arithmetic progressions.

Here, we consider natural generalizations of caps in $\FF_3^n$.
For $d \in \NN$, we call a set of points a \emph{$d$-cap} if, for each $k = 1, 2, \dots, d$, no $k+2$ of the points lie on a $k$-dimensional flat.
With this definition, a $1$-cap corresponds to the usual definition of a cap. We also remark that if $C$ is a set of points in $\FF_3^{n}$, then the points of $C$ are in general linear position if and only if $C$ is an $(n-1)$-cap.

Let $r(1,\FF_3^n)$ denote the maximal size of a $1$-cap in $\FF_3^n$. In general, it is a difficult problem to determine $r(1,\FF_3^n)$---in fact, the exact answer is known only when $n \leq 6$. Table \ref{tbl.bound} lists the best known upper and lower bounds on $r(1,\FF_3^n)$ for $n \leq 10$ \cite{V}. It is also known that in dimension $n \leq 6$, maximal $1$-caps are equivalent up to affine transformation \cite{ELS5, P4, P6}.

\begin{table}[h!]
\begin{tabular}{| c | c | c | c | c | c | c | c | c | c | c |} \hline Dimension & 1 & 2 & 3 &4 & 5& 6 & 7 & 8 & 9 & 10 \\ \hline
Lower bound &  2 & 4 & 9 & 20 & 45 & 112 & 236 & 496 & 1064 & 2240 \\ \hline
Upper bound &  2 & 4 & 9 & 20 & 45 & 112 &  291 & 771 & 2070 & 5619 \\ \hline
\end{tabular}
\caption{The best known bounds for the size of a maximal $1$-cap in $\FF_3^n$.}
\label{tbl.bound}
\end{table}

The asymptotic bounds on $r(1,\FF_3^n)$ are well-studied. In \cite{Elower}, Edel showed that 
\[\limsup_{n \to \infty} \frac{\log_3(r(1,\FF_3^n))}{n} \geq 0.724851
\]
and consequently that $r(1,\FF_3^n)$ is $\Omega(2.2174^n)$ (using Hardy-Littlewood's $\Omega$ notation). In more recent breakthrough work \cite{EG}, Ellenberg and Gijswijt (adapting a method of Croot, Lev, and Pach in \cite{CLP}) proved that $r(1,\FF_3^n)$ is $o(2.756^n)$. 

In this paper, we focus on the study of $2$-caps in $\FF_3^n$. We show that there is an equivalent arithmetic formulation of the definition of a $2$-cap. In particular, the $2$-caps in $\FF_3^n$ are exactly the Sidon sets in $\FF_3^n$, which are important objects in combinatorial number theory (we refer the interested reader to the survey \cite{OSurvey} and the references therein). Using this definition, we are able to compute the exact maximal size of a $2$-cap in $\FF_3^n$ when $n$ is even. 
We also examine $2$-caps in low dimension when $n$ is odd, in particular considering dimension $n = 3$, $5$, and $7$. 

\begin{table}[h!]
\begin{tabular}{| c | c | c | c | c | c | c | c | c | c | c |} \hline Dimension & 1 & 2 & 3 &4 & 5& 6 & 7 & 8 & $n$ even & $n$ odd \\ \hline
Lower bound &  $2$ & $3$ & $5$ & $9$ & $13$ & $27$ & $33$ & $81$ & $3^{n/2}$ & $3^{(n-1)/2} + 1$ \\ \hline
Upper bound &  $2$ & $3$ & $5$ & $9$ & $13$ & $27$ &  $47$ & $81$ & $3^{n/2}$ & $\left \lceil 3^{n/2} \right \rceil$ \\ \hline
\end{tabular}
\caption{Bounds for the size of a maximal $2$-cap in $\FF_3^n$.}
\label{tbl.second}
\end{table}

Table \ref{tbl.second} lists the bounds we obtain for the size of a maximal $2$-cap in $\FF_3^n$. The values in dimension $3$, $5$, and $7$ are given by Theorems \ref{thm.dim3} and \ref{thm.dim5}, and Proposition \ref{thm.dim7}, respectively. The bounds for even dimension follow from Theorem \ref{thm.even}. The upper bound in odd dimension $n$ follows from Proposition \ref{prop.upperbound} and the lower bound is given by adding one affinely independent point to the construction in dimension $n-1$. Knowing the exact value in even dimension also allows us to conclude that asymptotically, the maximal size of a $2$-cap in $\FF_3^n$ is $\Theta(3^{n/2})$. 

\subsection*{Acknowledgments}
The authors would like to thank W. Frank Moore for suggesting the project, as well as the anonymous referee for many helpful suggestions. Yixuan Huang was supported by a Wake Forest Research Fellowship during the summer of 2018 and Michael Tait was supported in part by NSF grant DMS-1606350.

\section{Preliminaries}

In this section, we establish basic notation, definitions, and background. The set of natural numbers is denoted $\NN = \{1,2,3,\dots\}$. Throughout, $d$ and $n$ will always denote natural numbers. An element $\ba \in \FF_3^n$ will be written as a row vector $\ba = (a_1, a_2, \dots, a_n)$ with each $a_i \in \{0,1,2\}$. 
We will sometimes order the vectors of $\FF_3^n$ lexicographically---i.e., by regarding them as ternary strings.
We use the notation $\be_1, \be_2, \ldots, \be_n$ to denote the $n$ standard basis vectors in an $n$-dimensional vector space.

A $k$-dimensional affine subspace of a vector space is called a \emph{$k$-dimensional flat}.
In particular, a $1$-dimensional flat is also called a \emph{line}. In the affine space $\FF_3^n$, every line consists of the points $\{ \ba, \ba + \bb, \ba + 2 \bb\}$ for some $\ba, \bb \in \FF_3^n$ where $\bb \neq \bzero$. Hence, the lines in $\FF_3^n$ correspond to three-term arithmetic progressions. It is easy to see that three distinct points in $\FF_3^n$ are collinear if and only if they sum to $\bzero$. Likewise, a $2$-dimensional flat is called a \emph{plane}. Any three non-collinear points determine a unique plane.
For $\ba = (a_1, a_2, \dots, a_k) \in \FF_3^k$ with $k < n$. The subset of $\FF_3^n$ whose first $k$ entries are $a_1, a_2, \ldots, a_k$ is an $(n-k)$-dimensional flat which we call \emph{the $\ba$-affine subspace} of $\FF_3^n$.

Two subsets $C$ and $D$ of a vector space are called \emph{affinely equivalent} if there exists an invertible affine transformation $T$ such that $T(C) = D$. It is clear that affine equivalence determines an equivalence relation on the power set of a vector space. Given a set of points $X$ in a vector space, its affine span is given by the set of all affine combinations of points of $X$. A set $X$ is called {\em affinely independent} if no proper subset of $X$ has the same affine span as $X$. Equivalently, $\{ \bx_0, \bx_1, \dots, \bx_n\}$ is affinely independent if and only if $\{ \bx_1 - \bx_0, \bx_2 - \bx_0, \dots, \bx_n - \bx_0\}$ is linearly independent.
 
\begin{definition}A subset $C$ of $\FF_3^n$ is called a \emph{$d$-cap} if, for each $k = 1, 2, \dots, d$, no $k+2$ points of $C$ lie on a $k$-dimensional flat. Equivalently, $C$ is a $d$-cap if and only if any subset of $C$ of size at most $d+2$ is affinely independent. 
A $d$-cap is called \emph{complete} if it is not a proper subset of another $d$-cap and is called \emph{maximal} if it is of the largest possible cardinality.
\end{definition}

As mentioned in the introduction, a $1$-cap is a classical cap. We will denote the size of a maximal $d$-cap in $\FF_3^n$ by $r(d,\FF_3^n)$.
We remark that since invertible affine transformations preserve affine independence, the image of a $d$-cap under an invertible affine transformation is again a $d$-cap.
As a warm-up, we prove some basic facts about maximal $d$-caps in $\FF_3^n$.

\begin{lemma}
\label{lem.lowdim}
We have that $r(d, \FF_3^n) \geq n+1$ with equality if $n \leq d$.
\end{lemma}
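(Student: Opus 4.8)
The plan is to prove the two assertions separately: first the unconditional lower bound $r(d,\FF_3^n) \geq n+1$, and then the matching upper bound under the hypothesis $n \leq d$.

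For the lower bound, I would exhibit an explicit $d$-cap of size $n+1$. The natural candidate is $C = \{\bzero, \be_1, \be_2, \dots, \be_n\}$, which is affinely independent because the difference vectors $\be_1 - \bzero, \dots, \be_n - \bzero$ form the standard basis and are therefore linearly independent. The key elementary observation is that every subset of an affinely independent set is again affinely independent, which follows immediately from the characterization of affine independence via linear independence of difference vectors. In particular every subset of $C$ of size at most $d+2$ is affinely independent, so $C$ is a $d$-cap by definition, and hence $r(d,\FF_3^n) \geq |C| = n+1$.

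For the upper bound when $n \leq d$, I would argue by contradiction. Suppose $C$ is a $d$-cap in $\FF_3^n$ with $|C| \geq n+2$, and choose any subset $S \subseteq C$ with $|S| = n+2$. Since $n \leq d$ we have $|S| = n+2 \leq d+2$, so the definition of a $d$-cap forces $S$ to be affinely independent. But an affinely independent set of $n+2$ points in $\FF_3^n$ would have $n+1$ linearly independent difference vectors in the $n$-dimensional space $\FF_3^n$, which is impossible. Therefore $|C| \leq n+1$, and together with the lower bound this yields $r(d,\FF_3^n) = n+1$.

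I do not anticipate any real obstacle: the whole argument rests on the equivalence between affine independence of a point set and linear independence of its difference vectors, together with the facts that subsets of independent sets are independent and that $\FF_3^n$ admits no linearly independent set of size $n+1$. The only point requiring a little care is to invoke the hypothesis $n \leq d$ precisely where it is needed, namely to ensure that an $(n+2)$-element subset still lies within the range of cardinalities constrained by the $d$-cap condition.
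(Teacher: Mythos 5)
Your proposal is correct and follows essentially the same route as the paper: the lower bound via the explicit affinely independent set $\{\bzero, \be_1, \dots, \be_n\}$, and the upper bound by noting that an $(n+2)$-point subset of a $d$-cap with $n \leq d$ would have to be affinely independent, which is impossible in $\FF_3^n$. The paper phrases the upper bound slightly more indirectly via $r(d,\FF_3^n) \leq r(n,\FF_3^n) \leq n+1$, but the substance is identical.
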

\begin{proof}The set $\{\bzero, \be_1, \dots, \be_n\}$ is an affinely independent subset of $\FF_3^n$ of size $n+1$ and hence is a $d$-cap for any $d \in \NN$. Therefore, $r(d,\FF_3^n) \geq n+1$.

Now suppose $n \leq d$. Since, by definition, a $d$-cap must be an $n$-cap, we have that $r(d,\FF_3^n) \leq r(n,\FF_3^n)$.
A maximal affinely independent set in $\FF_3^n$ has size $n+1$ so $r(n,\FF_3^n) \leq n+1$, and so $r(d,\FF_3^n) = n+1$.
\end{proof}

\begin{corollary}\label{cor.affequiv}
When $n \leq d$, all maximal $d$-caps in $\FF_3^n$ are affinely equivalent.
\end{corollary}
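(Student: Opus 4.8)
The plan is to show that a maximal $d$-cap in $\FF_3^n$ with $n \le d$ is nothing more than an affinely independent set of maximal size, and then to invoke the standard fact that any two such sets are related by an invertible affine transformation. First I would let $C$ be a maximal $d$-cap in $\FF_3^n$. By Lemma \ref{lem.lowdim}, since $n \le d$ we have $|C| = r(d,\FF_3^n) = n+1$. Because $n+1 \le d+2$, the defining property of a $d$-cap applied to the subset $C$ itself shows that $C$ is affinely independent. Thus every maximal $d$-cap is an affinely independent $(n+1)$-element subset of $\FF_3^n$, and conversely any such subset is a $d$-cap (all of its subsets are affinely independent) of size $n+1$, hence maximal.

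It therefore suffices to prove that any two affinely independent sets $\{\bx_0, \bx_1, \dots, \bx_n\}$ and $\{\by_0, \by_1, \dots, \by_n\}$ in $\FF_3^n$ are affinely equivalent. Using the characterization of affine independence recalled in the Preliminaries, $\{\bx_1 - \bx_0, \dots, \bx_n - \bx_0\}$ and $\{\by_1 - \by_0, \dots, \by_n - \by_0\}$ are each bases of $\FF_3^n$, so there is a (unique) invertible linear map $L \colon \FF_3^n \to \FF_3^n$ with $L(\bx_i - \bx_0) = \by_i - \by_0$ for $i = 1, \dots, n$. I would then define the affine map $T(\bv) = L(\bv - \bx_0) + \by_0$, check that it is invertible (its linear part is $L$), and verify $T(\bx_i) = \by_i$ for all $i$, including $i = 0$. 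This gives $T(\{\bx_0, \dots, \bx_n\}) = \{\by_0, \dots, \by_n\}$.

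Combining the two steps: given two maximal $d$-caps $C$ and $D$ in $\FF_3^n$, both are affinely independent of size $n+1$, so after fixing any enumerations of their points the construction above produces an invertible affine $T$ with $T(C) = D$. Hence all maximal $d$-caps are affinely equivalent. I do not anticipate a genuine obstacle here; the only points requiring minor care are confirming that $|C| \le d+2$ forces affine independence of the whole cap (which is where the hypothesis $n \le d$ enters) and checking that the explicitly constructed $T$ is invertible and affine, both of which are routine.
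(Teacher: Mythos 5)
Your proposal is correct and follows the same route as the paper: use Lemma \ref{lem.lowdim} to identify a maximal $d$-cap (for $n \le d$) with an affine basis of $\FF_3^n$, then invoke the fact that all affine bases are affinely equivalent. The only difference is that you spell out the routine verifications (affine independence of the whole cap via $n+1 \le d+2$, and the explicit construction of the affine transformation) that the paper takes as known.
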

\begin{proof}By Lemma \ref{lem.lowdim}, when $n \leq d$, a maximal $d$-cap in $\FF_3^n$ is a maximal affinely independent set, i.e., an affine basis of $\FF_3^n$. All affine bases in an affine space are equivalent up to affine transformation.
\end{proof}

\begin{lemma}\label{lem.incdec} For fixed $d $, $r(d,\FF_3^n)$ is a non-decreasing function of $n$ and for fixed $n $, $r(d,\FF_3^n)$ is a non-increasing function of $d$.
\end{lemma}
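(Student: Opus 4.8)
The plan is to treat the two monotonicity statements separately, since each follows from a short structural observation and neither requires any of the quantitative theory.

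For the claim that $r(d,\FF_3^n)$ is non-increasing in $d$, I would argue that every $(d+1)$-cap in $\FF_3^n$ is automatically a $d$-cap. Using the affine-independence reformulation in the definition: a $(d+1)$-cap has the property that every subset of size at most $d+3$ is affinely independent, and in particular every subset of size at most $d+2$ is affinely independent, which is exactly the condition for a $d$-cap. Hence a maximal $(d+1)$-cap is in particular a $d$-cap, so $r(d+1,\FF_3^n) \leq r(d,\FF_3^n)$. (This is the same observation already used in the proof of Lemma~\ref{lem.lowdim}, where a $d$-cap was noted to be an $n$-cap when $n \le d$.)

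For the claim that $r(d,\FF_3^n)$ is non-decreasing in $n$, I would take a maximal $d$-cap $C \subseteq \FF_3^n$ and produce a $d$-cap of the same size in $\FF_3^{n+1}$, namely the image $\iota(C)$ under the map $\iota\colon \FF_3^n \to \FF_3^{n+1}$, $\ba \mapsto (\ba, 0)$, which appends a zero coordinate. This $\iota$ is injective and affine (indeed linear). The key point is that an injective affine map sends affinely independent sets to affinely independent sets: if $\{\bx_0, \dots, \bx_m\}$ is affinely independent, then $\{\bx_1 - \bx_0, \dots, \bx_m - \bx_0\}$ is linearly independent, and applying the (injective) linear part of $\iota$ preserves linear independence, so $\{\iota(\bx_0), \dots, \iota(\bx_m)\}$ is affinely independent. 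Now any subset of $\iota(C)$ of size at most $d+2$ is $\iota(S)$ for some $S \subseteq C$ of size at most $d+2$; since $C$ is a $d$-cap, $S$ is affinely independent, hence so is $\iota(S)$. Thus $\iota(C)$ is a $d$-cap in $\FF_3^{n+1}$, and $|\iota(C)| = |C|$ by injectivity, giving $r(d,\FF_3^{n+1}) \geq r(d,\FF_3^n)$.

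There is no genuine obstacle in this argument; the only step meriting care is the verification that injective affine maps preserve affine independence, and that is immediate from the linear-algebraic characterization of affine independence recorded in the Preliminaries. I would spell out the embedding $\iota$ explicitly rather than merely invoking that it is an invertible affine transformation (it is not surjective onto $\FF_3^{n+1}$), since the relevant fact is only that it is injective and affine.
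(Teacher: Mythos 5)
Your proposal is correct and follows the same route as the paper: the paper likewise notes that a $d$-cap is by definition a $(d-1)$-cap and that a $d$-cap in $\FF_3^{n-1}$ embeds into $\FF_3^n$ as an affine subspace. You simply spell out the embedding and the preservation of affine independence in more detail than the paper's one-line justification.
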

\begin{proof}
Since $\FF_3^{n-1}$ is an affine subspace of $\FF_3^n$, a $d$-cap in $\FF_3^{n-1}$ naturally embeds as a $d$-cap in $\FF_3^n$. Hence $r(d,\FF_3^{n-1}) \leq r(d, \FF_3^n)$ so the first statement follows. The second statement follows since, by definition, a $d$-cap in $\FF_3^n$ must be a $(d-1)$-cap. Hence, $r(d-1, \FF_3^n) \geq r(d,\FF_3^n)$.
\end{proof}

\section{$2$-caps in $\FF_3^n$}

We now restrict our attention to the study of $2$-caps in $\FF_3^n$. Our first observation is that in $\FF_3^n$, the definition of a $2$-cap is equivalent to the definition of a Sidon set.

\begin{definition}Let $G$ be an abelian group. A subset $A \subseteq G$ is called a \emph{Sidon set} if, whenever $a + b = c + d$ with $a,b,c,d \in A$, the pair $(a,b)$ is a permutation of the pair $(c,d)$.
\end{definition}

\begin{theorem}\label{thm.sidon} A subset $C$ of $\FF_3^n$ is a $2$-cap if and only if it is a Sidon set.
\end{theorem}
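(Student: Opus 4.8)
The plan is to prove both implications by converting the affine-geometric conditions in the definition of a $2$-cap into arithmetic identities. I will use throughout the equivalent formulation from the definition (that $C$ is a $2$-cap iff every subset of $C$ of size at most $4$ is affinely independent) together with the two facts recorded in the preliminaries: three distinct points of $\FF_3^n$ are collinear exactly when they sum to $\bzero$, and $\{\bx_0,\bx_1,\dots,\bx_k\}$ is affinely independent iff $\bx_1-\bx_0,\dots,\bx_k-\bx_0$ are linearly independent. I will also use freely that $-1=2$ in $\FF_3$, and that a $2$-cap is in particular a $1$-cap.

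For the direction ``$2$-cap $\Rightarrow$ Sidon'' I would argue contrapositively: assume $\ba+\bb=\bc+\bd$ with $\ba,\bb,\bc,\bd\in C$ but $(\ba,\bb)$ not a permutation of $(\bc,\bd)$. A short check of how many of the four vectors coincide shows that either all four are distinct, or exactly one of the within-pair equalities $\ba=\bb$, $\bc=\bd$ holds while the remaining three vectors are distinct (any cross-coincidence such as $\ba=\bc$ cancels to make $(\ba,\bb)$ a permutation of $(\bc,\bd)$). In the latter case, say $\ba=\bb$, the equation becomes $2\ba=\bc+\bd$, i.e.\ $\ba+\bc+\bd=\bzero$ with $\ba,\bc,\bd$ distinct, giving three collinear points of $C$ and contradicting the $1$-cap property. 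If all four are distinct, write $\bd=\ba+\bb-\bc=1\cdot\ba+1\cdot\bb+(-1)\cdot\bc$; since the coefficients sum to $1$, the point $\bd$ lies in the affine span of $\{\ba,\bb,\bc\}$, so $\{\ba,\bb,\bc,\bd\}$ is an affinely dependent set of four distinct points and hence lies in a flat of dimension at most $2$ --- so either three of them are collinear or all four lie on a plane, again contradicting that $C$ is a $2$-cap.

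For ``Sidon $\Rightarrow$ $2$-cap'' I would first rule out collinear triples: if distinct $\bx,\by,\bz\in C$ were collinear, then $\bx+\by+\bz=\bzero$, so $\bx+\by=\bz+\bz$, and the Sidon property forces $\bx=\by=\bz$, a contradiction. Now take four distinct points $\ba,\bb,\bc,\bd\in C$; the previous step makes $\ba,\bb,\bc$ affinely independent, so if $\{\ba,\bb,\bc,\bd\}$ were affinely dependent then $\bd=\alpha\ba+\beta\bb+\gamma\bc$ for some $\alpha,\beta,\gamma\in\FF_3$ with $\alpha+\beta+\gamma=1$. Up to relabelling $\ba,\bb,\bc$, the only such coefficient patterns are $(1,0,0)$, $(2,1,1)$, and $(2,2,0)$: the first gives $\bd=\ba$; the second gives $\bd=-\ba+\bb+\bc$, hence $\ba+\bd=\bb+\bc$, so Sidon forces $\ba\in\{\bb,\bc\}$; the third gives $\bd=-\ba-\bb$, hence $\bd+\bd=\ba+\bb$, so Sidon forces $\ba=\bb=\bd$. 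Each conclusion contradicts distinctness, so every subset of $C$ of size at most $4$ is affinely independent and $C$ is a $2$-cap.

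I do not expect a genuine obstacle here; the only thing requiring care is bookkeeping --- verifying that the coincidence analysis in the forward direction is exhaustive, and that $(1,0,0),(2,1,1),(2,2,0)$ is genuinely the complete list (up to permutation) of solutions of $\alpha+\beta+\gamma=1$ over $\FF_3$ with entries in $\{0,1,2\}$. The conceptual core is simply the identity $\bd=\ba+\bb-\bc$ placing $\bd$ on the affine span of $\ba,\bb,\bc$, and, conversely, the observation that over $\FF_3$ there are only finitely many coplanarity relations among four points, each of which rearranges into a Sidon-type equation.
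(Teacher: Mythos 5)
Your proof is correct and follows essentially the same route as the paper: both directions are argued contrapositively via the facts that collinearity means summing to $\bzero$ and that coplanarity of four points amounts to a relation $\bd=\ba+\bb-\bc$ (your enumeration of coefficient triples $(\alpha,\beta,\gamma)$ with $\alpha+\beta+\gamma=1$ over $\FF_3$ is just the paper's explicit $3\times 3$ table of the plane in different clothing). The only difference is organizational --- you place the finite enumeration in the ``Sidon $\Rightarrow$ $2$-cap'' direction while the paper places it in the other one --- and your coincidence analysis and coefficient list are both complete and correct.
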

\begin{proof}
First suppose that $C$ is not a $2$-cap. Then $C$ contains three points which are collinear or $C$ contains four points which are coplanar. 
If $C$ contains three distinct collinear points $\ba, \bb, \bc$ then $\ba + \bb + \bc = \bzero$ and hence $\ba + \bb = \bc + \bc$ so $C$ is not a Sidon set.

Suppose therefore that no three points in $C$ are collinear. Then $C$ contains four coplanar points, say $\{\ba, \bb, \bc, \bd\}$. Every set of three distinct non-collinear points in $\FF_3^n$ lies on a unique $2$-dimensional flat. In particular, the $2$-dimensional flat $F$ containing $\ba$, $\bb$, and $\bc$ is given by
\[ F =  \begin{tabular}{|c|c|c|}\hline 
$\ba$ & $\bb$ & $-\ba - \bb$  \\ \hline
$\bc$ & $-\ba + \bb + \bc$ & $\ba - \bb + \bc$ \\ \hline
$-\ba - \bc$ & $\ba + \bb - \bc$ & $-\bb - \bc$ \\ \hline
\end{tabular}
\]
and since we assumed that no three points in $C$ are collinear, we must have that $\bd = -\ba + \bb + \bc$, $\bd = \ba - \bb + \bc$ or $\bd = \ba + \bb - \bc$. In the first case, $\ba + \bd = \bb + \bc$, in the second case, $\bb + \bd = \ba + \bc$, and in the third case $\bc + \bd + \ba + \bb$. In any case, $C$ is not a Sidon set.

Conversely, suppose that $C$ is not a Sidon set. Then either $C$ contains three distinct points $\ba, \bb, \bc$ such that $\ba + \ba = \bb + \bc$, or $C$ contains four distinct points $\ba, \bb, \bc, \bd$ such that $\ba + \bb = \bc + \bd$. In the first case, $\ba + \bb + \bc = \bzero$ so $C$ contains a line. In the second case, $\bd = \ba + \bb - \bc$, so $\bd$ lies in the plane determined by $\ba$, $\bb$, and $\bc$, and hence the four points are coplanar. In either case, $C$ is not a $2$-cap.
\end{proof}

Since, in $\FF_3^n$, $2$-caps correspond to Sidon sets, we will use the terms interchangeably throughout. We obtain an upper bound on $r(2, \FF_3^n)$ by an easy counting argument (c.f. \cite[Corollary 2.2]{CRV}).

\begin{proposition}\label{prop.upperbound} For any $n \in \NN$, $r(2, \FF_3^n) \cdot (r(2, \FF_3^n) - 1 ) \leq 3^{n} - 1$. 
\end{proposition}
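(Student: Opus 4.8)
The plan is to exploit the Sidon property established in Theorem~\ref{thm.sidon} via a standard difference-counting argument. Let $C$ be a maximal $2$-cap in $\FF_3^n$, so $|C| = r(2,\FF_3^n)$, and consider the set of ordered differences
\[
D = \{\, \ba - \bb : \ba, \bb \in C, \ \ba \neq \bb \,\}.
\]
There are exactly $|C|(|C|-1)$ ordered pairs $(\ba,\bb)$ with $\ba \neq \bb$, so it suffices to show that the map $(\ba,\bb) \mapsto \ba - \bb$ is injective on these pairs, since then $|C|(|C|-1) = |D| \leq |\FF_3^n \setminus \{\bzero\}| = 3^n - 1$.

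For the injectivity, suppose $\ba - \bb = \bc - \bd$ with $(\ba,\bb)$ and $(\bc,\bd)$ both pairs of distinct elements of $C$. Then $\ba + \bd = \bc + \bb$. By the Sidon property (using Theorem~\ref{thm.sidon} to pass from $2$-cap to Sidon set), the pair $(\ba,\bd)$ is a permutation of the pair $(\bc,\bb)$. The case $\ba = \bc$, $\bd = \bb$ gives $(\ba,\bb) = (\bc,\bd)$ as desired. In the other case $\ba = \bb$ and $\bd = \bc$, which contradicts $\ba \neq \bb$. Hence the map is injective, and the differences $\ba - \bb$ are nonzero and distinct, completing the count.

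I do not anticipate a genuine obstacle here; the only point requiring a little care is making sure the Sidon condition is applied to a sum of four elements that need not all be distinct (here $\ba,\bd$ versus $\bc,\bb$ could overlap), and checking that the degenerate overlap cases either recover the trivial solution or violate $\ba \neq \bb$. One could alternatively phrase the argument without reference to Sidon sets by counting lines and planes directly, but routing through Theorem~\ref{thm.sidon} makes the proof a one-line consequence of a well-known fact about Sidon sets, which matches the citation to \cite[Corollary 2.2]{CRV}.
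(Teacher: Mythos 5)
Your proof is correct and follows essentially the same route as the paper: both pass to the Sidon property via Theorem~\ref{thm.sidon}, show that $\ba - \bb = \bc - \bd$ forces $(\ba,\bb) = (\bc,\bd)$ for distinct pairs, and count the $|C|(|C|-1)$ nonzero differences against $3^n - 1$. No issues.
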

\begin{proof}
Suppose $C \subset \FF_3^n$ is a $2$-cap and hence, by Theorem \ref{thm.sidon}, a Sidon set. For $\ba,\bb,\bc,\bd \in C$, if $\ba-\bb = \bc-\bd$ then $\{\ba,\bd\} = \{\bc,\bb\}$ and so we have either $\ba=\bb$, or else $\ba=\bc$ and $\bb=\bd$. Therefore, the set $\{\ba-\bb : \ba,\bb\in C, \ba\neq \bb\}$ has size $|C|\left(|C|-1\right)$. Since these differences are nonzero, we have
\[
|C|\left(|C|-1\right) \leq 3^n-1. \qedhere
\]
\end{proof}

\subsection{Even dimension}
\begin{theorem}\label{thm.even} If $n$ is even, then $r(2,\FF_3^n) = 3^{n/2}$.
\end{theorem}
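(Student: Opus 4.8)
The upper bound $r(2,\FF_3^n)\le 3^{n/2}$ is already almost immediate from Proposition~\ref{prop.upperbound}: when $n$ is even, $|C|(|C|-1)\le 3^n-1$ forces $|C|\le 3^{n/2}$, since $3^{n/2}(3^{n/2}-1)<3^n-1$ is false only for... wait, one must check this inequality carefully---$3^{n/2}(3^{n/2}-1)=3^n-3^{n/2}\le 3^n-1$ holds, so $|C|=3^{n/2}$ is not yet excluded, and $|C|=3^{n/2}+1$ gives $(3^{n/2}+1)3^{n/2}=3^n+3^{n/2}>3^n-1$, so indeed $|C|\le 3^{n/2}$. The real work is the matching lower bound: constructing a Sidon set (equivalently, by Theorem~\ref{thm.sidon}, a $2$-cap) of size exactly $3^{n/2}$ in $\FF_3^n$.

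**The construction.** Write $n=2m$ and identify $\FF_3^n$ with $\FF_{3^m}\times\FF_{3^m}$ as an $\FF_3$-vector space (via an $\FF_3$-linear isomorphism $\FF_{3^m}\cong\FF_3^m$; affine/linear equivalence preserves the $2$-cap property). The plan is to take $C=\{(x,x^2): x\in\FF_{3^m}\}$, the graph of the squaring map, which has exactly $3^m=3^{n/2}$ points. To verify $C$ is Sidon, suppose $(x_1,x_1^2)+(x_2,x_2^2)=(x_3,x_3^2)+(x_4,x_4^2)$ in $\FF_{3^m}\times\FF_{3^m}$. Then $x_1+x_2=x_3+x_4$ and $x_1^2+x_2^2=x_3^2+x_4^2$; subtracting the square of the first equation (times the appropriate constant) shows $x_1x_2=x_3x_4$, so $\{x_1,x_2\}$ and $\{x_3,x_4\}$ are the roots of the same monic quadratic over $\FF_{3^m}$ and hence $(x_1,x_2)$ is a permutation of $(x_3,x_4)$. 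I should double-check the characteristic-$3$ arithmetic: $2x_1x_2=(x_1+x_2)^2-(x_1^2+x_2^2)$ and $2=-1\ne 0$ in $\FF_3$, so this is fine.

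**The main obstacle.** The only subtlety is confirming that this standard Sidon-set construction transports correctly through the identification $\FF_3^{2m}\cong\FF_{3^m}^2$---specifically that the Sidon property is with respect to addition in $\FF_3^{2m}$, which coincides with componentwise addition in $\FF_{3^m}^2$ under an $\FF_3$-linear isomorphism, so there is genuinely nothing to check there. A secondary point is to make sure the four points $(x_i,x_i^2)$ being a "permutation" in the sense of the Sidon definition matches the conclusion that $(x_1,x_2)$ is a permutation of $(x_3,x_4)$; this is immediate since the map $x\mapsto(x,x^2)$ is injective. So the proof reduces to: (i) the upper-bound computation from Proposition~\ref{prop.upperbound}; (ii) exhibiting the graph-of-squaring set; (iii) the two-line verification that it is Sidon via the quadratic-roots argument; (iv) invoking Theorem~\ref{thm.sidon} to conclude it is a $2$-cap of the required size. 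I anticipate the writeup is short, with the only place to be careful being the characteristic-$3$ identity $x_1x_2 = -\bigl((x_1+x_2)^2-(x_1^2+x_2^2)\bigr)$.
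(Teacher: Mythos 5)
Your proposal is correct and follows essentially the same route as the paper: identify $\FF_3^n$ with $\FF_{3^{n/2}}\times\FF_{3^{n/2}}$, take the graph of the squaring map as the Sidon set for the lower bound, and apply Proposition~\ref{prop.upperbound} for the upper bound. The only difference is that you verify the Sidon property of $\{(x,x^2)\}$ directly via the quadratic-roots argument (correctly, including the characteristic-$3$ check that $2\ne 0$), whereas the paper cites this standard fact to the literature.
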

\begin{proof} First we will show the lower bound, $r(2, \FF_3^n) \geq 3^{n/2}$. Since $\FF_3^n$ is additively isomorphic to $\FF_3^{n/2}\times \FF_3^{n/2}$, it suffices to construct a Sidon set of size $3^{n/2}$ in $\FF_3^{n/2}\times \FF_3^{n/2}$. As vector spaces over $\FF_3$, $\FF_{3}^{n/2}$ is isomorphic to $\FF_{3^{n/2}}$, the finite field with $3^{n/2}$ elements. Hence, it suffices to construct a Sidon set of size $3^{n/2}$ in $\FF_{3^{n/2}}\times \FF_{3^{n/2}}$ This follows easily from the following claim (for a proof, see \cite[Example 1]{C}).

\begin{claim}
Let $q$ be an odd prime power and $\FF_q$ be the finite field of order $q$. Then the set $\{(x,x^2): x\in \FF_q\}$ is a Sidon set in $\FF_q \times \FF_q$.
\end{claim}

It is clear that the set $\{(x,x^2): x\in \FF_{3^{n/2}}\}$ has size $3^{n/2}$ and so we have $r(2,\FF_3^n) \geq 3^{n/2}$. For the upper bound, let $C \subset \FF_3^n$ be a $2$-cap. Since $n$ is even, $3^{n/2}$ is an integer, and if $|C| \geq 3^{n/2} + 1$, this contradicts Proposition \ref{prop.upperbound}. Therefore, $r(2, \FF_3^n) \leq 3^{n/2}$.
\end{proof}

\begin{corollary}As $n \to \infty$, $r(2, \FF_3^n)$ is $\Theta(3^{n/2})$.
\end{corollary}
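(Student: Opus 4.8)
The plan is to sandwich $r(2,\FF_3^n)$ between two positive constant multiples of $3^{n/2}$, using only the results already established in this section. For the upper bound I would start from Proposition \ref{prop.upperbound}, which gives $r(2,\FF_3^n)\bigl(r(2,\FF_3^n)-1\bigr) \leq 3^n - 1$. Viewing this as a quadratic inequality in $x = r(2,\FF_3^n)$ and solving, one gets
\[
r(2,\FF_3^n) \leq \frac{1 + \sqrt{4\cdot 3^n - 3}}{2} < 3^{n/2} + \frac{1}{2},
\]
so in particular $r(2,\FF_3^n) = O(3^{n/2})$.

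For the lower bound I would split on the parity of $n$. When $n$ is even, Theorem \ref{thm.even} gives $r(2,\FF_3^n) = 3^{n/2}$ outright. When $n$ is odd, Lemma \ref{lem.incdec} says $r(2,\FF_3^n)$ is non-decreasing in $n$, so $r(2,\FF_3^n) \geq r(2,\FF_3^{n-1}) = 3^{(n-1)/2} = 3^{-1/2}\cdot 3^{n/2}$, using that $n-1$ is even and again invoking Theorem \ref{thm.even}. In both cases $r(2,\FF_3^n) \geq 3^{-1/2}\cdot 3^{n/2}$, hence $r(2,\FF_3^n) = \Omega(3^{n/2})$. Combining the two bounds yields $r(2,\FF_3^n) = \Theta(3^{n/2})$.

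I do not expect any genuine obstacle: the corollary is pure bookkeeping on top of Theorem \ref{thm.even}, Proposition \ref{prop.upperbound}, and Lemma \ref{lem.incdec}. The only point worth a moment's care is to confirm that the implied constants are honest positive constants independent of $n$ — they are, namely $1$ on the upper side and $3^{-1/2}$ on the lower side — so that the displayed chain $3^{(n-1)/2} \leq r(2,\FF_3^n) \leq 3^{n/2} + \tfrac12$ indeed certifies the asymptotic order.
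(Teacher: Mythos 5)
Your proof is correct and is essentially the argument the paper intends (the corollary is stated without proof, following immediately from Theorem \ref{thm.even}, Proposition \ref{prop.upperbound}, and the monotonicity in Lemma \ref{lem.incdec}). The only cosmetic difference is that you get the upper bound by solving the quadratic from Proposition \ref{prop.upperbound} rather than by bounding $r(2,\FF_3^n) \leq r(2,\FF_3^{n+1}) = 3^{(n+1)/2}$ via monotonicity; both give the same conclusion with explicit constants.
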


The construction above can be leveraged into the following partitioning theorem.

\begin{theorem}\label{partition}
When $n$ is even, there is a partition of $\FF_3^n$ into maximal $2$-caps.
\end{theorem}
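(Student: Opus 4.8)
The plan is to partition $\FF_3^n$ into cosets of a well-chosen subgroup and observe that the Sidon-set construction from Theorem \ref{thm.even} respects this coset structure, so that translating the construction by coset representatives yields the desired partition. Write $m = n/2$ and identify $\FF_3^n$ with $\FF_{3^m} \times \FF_{3^m}$ as in the proof of Theorem \ref{thm.even}. Let $S = \{(x, x^2) : x \in \FF_{3^m}\}$, which is a maximal $2$-cap of size $3^m$. Consider the subgroup $H = \{0\} \times \FF_{3^m}$; its cosets are $\{(c, y) : y \in \FF_{3^m}\}$ for $c \in \FF_{3^m}$, and there are exactly $3^m$ of them, each of size $3^m$. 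The key point is that $S$ is a transversal of $H$: it meets each coset $(c, *)$ in the single point $(c, c^2)$.

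Next I would exhibit the partition explicitly. For each $b \in \FF_{3^m}$, define $S_b = \{(x, x^2 + b) : x \in \FF_{3^m}\} = S + (0, b)$. Since translation by the fixed vector $(0,b)$ is an invertible affine transformation, each $S_b$ is again a maximal $2$-cap, of size $3^m$. These $3^m$ sets are pairwise disjoint: if $(x, x^2 + b) = (x', x'^2 + b')$ then $x = x'$, hence $x^2 + b = x^2 + b'$, hence $b = b'$. Finally they cover $\FF_3^n$: any $(c, y)$ lies in $S_b$ for $b = y - c^2$. Counting, $3^m$ disjoint sets each of size $3^m$ fill all of $3^{2m} = 3^n$ points, so $\{S_b : b \in \FF_{3^m}\}$ is the required partition.

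I do not expect a serious obstacle here; the argument is essentially a translation/counting argument once the field identification is in place. The one point deserving care is simply making sure the translates $S + (0,b)$ are genuinely disjoint and exhaustive, which the computation above handles, and invoking that an affine translate of a maximal $2$-cap is a maximal $2$-cap (already noted in the paper, since invertible affine transformations preserve $2$-caps and cardinality). If one wanted to avoid even naming the subgroup $H$, the whole proof collapses to: the map $(x, b) \mapsto (x, x^2 + b)$ is a bijection $\FF_{3^m} \times \FF_{3^m} \to \FF_3^n$, and its fibers over the second coordinate are exactly the maximal $2$-caps $S_b$.
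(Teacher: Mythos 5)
Your proposal is correct and is essentially the paper's own proof: both define $S_b=\{(x,x^2+b):x\in\FF_{3^{n/2}}\}$ as translates of the parabola Sidon set, check pairwise disjointness via the first coordinate, and conclude by counting. The coset framing you add is harmless window dressing on the same argument.
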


This serves as an analogue to similar results for $1$-caps in $\FF_3^n$. It is well-known that $\FF_3^3$ can be partitioned into three maximal $1$-caps of size $9$. It is possible to partition $\FF_3^2$ into a single point and two disjoint maximal $1$-caps of size $4$. Finally, \cite[Theorem 3.3]{FKMPW} shows that $\FF_3^4$ can be partitioned into a single point and four disjoint maximal $1$-caps of size $20$.
\begin{proof}[Proof of Theorem \ref{partition}]
Since translations of Sidon sets are also Sidon sets, for each $a\in \FF_{3^{n/2}}$ the set $S_a:=\{(x,x^2+a):x\in \FF_{3^{n/2}}\}$ is a maximal $2$-cap. Since $(x, x^2+a) = (y, y^2+b)$ implies $x=y$ and hence $a=b$, we have that $S_a$ and $S_b$ are disjoint for $a\not=b$. Therefore, as $a$ ranges over $\FF_{3^{n/2}}$ the sets $S_a$ cover $3^n$ points and thus there is the claimed partition.
\end{proof}

\begin{question}By Corollary \ref{cor.affequiv}, all maximal $2$-caps in $\FF_3^2$ are affinely equivalent. Is this true in $\FF_3^n$ when $n$ is even?
\end{question}

We remark that when $n = 4$, a computer program verified that all maximal $2$-caps sum to $\bzero$. If a set of nine points sums to $\bzero$ in $\FF_3^4$, then its image under any affine transformation will likewise sum to $\bzero$, so this is a necessary condition for all maximal $2$-caps in $\FF_3^4$ to be affinely equivalent.

\subsection{Odd dimension}

\begin{lemma}\label{lem.dim3} If $C = \{\ba, \bb, \bc, \bd \}$ is a $2$-cap of size four in $\FF_3^n$ then $D = \{\ba, \bb, \bc, \bd, \ba + \bb + \bc + \bd \}$ is a $2$-cap of size five.
\end{lemma}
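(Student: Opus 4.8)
The plan is to use the Sidon set characterization from Theorem~\ref{thm.sidon}: since $C$ is a $2$-cap, it is a Sidon set, and I want to show that $D$ is also a Sidon set (equivalently, a $2$-cap), where the new point is $\be := \ba + \bb + \bc + \bd$. Since $|C| = 4$, I must also check that $\be$ is genuinely distinct from each of $\ba, \bb, \bc, \bd$, so that $|D| = 5$.

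First I would dispose of the distinctness. If, say, $\be = \ba$, then $\bb + \bc + \bd = \bzero$, which by the remark in the Preliminaries means $\bb, \bc, \bd$ are collinear (or coincide), contradicting that $C$ is a $2$-cap. The same argument rules out $\be$ equalling any of the four original points by symmetry, so $|D| = 5$.

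Next I would verify the Sidon condition for $D$. Suppose $u + v = w + z$ with $u,v,w,z \in D$. If none of these four equals $\be$, the relation holds in $C$, which is Sidon, so we are done. So I may assume $\be$ appears; the cases to handle are (i) $\be$ appears on exactly one side, say $\be + v = w + z$ with $v, w, z \in C$, and (ii) $\be$ appears on both sides, say $\be + v = \be + z$, which immediately forces $v = z$ and the pairs match. For case (i), substitute $\be = \ba + \bb + \bc + \bd$ to get $\ba + \bb + \bc + \bd + v = w + z$; since $v, w, z$ lie in the four-element set $\{\ba,\bb,\bc,\bd\}$, after cancelling $v$ from both sides (if $v \in \{w,z\}$ one checks directly) this reduces to a linear relation among $\ba, \bb, \bc, \bd$ of the form (sum of some of them) $=$ (sum of the others) with the two sides of sizes that, using $2 = -1$ in $\FF_3$, can be normalized. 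The key point is that any such nontrivial relation among three or four of the points $\ba,\bb,\bc,\bd$ would say that some $\le 4$ of them are affinely dependent, contradicting that $C$ is a $2$-cap (which forces every $\le 4$-subset to be affinely independent). I expect the main obstacle to be a careful bookkeeping of these subcases in case (i)—tracking which of $v, w, z$ coincide with which of $\ba,\bb,\bc,\bd$ and reducing each resulting $\FF_3$-linear identity to an affine dependence that $C$ cannot have—but each individual subcase is a short computation once the setup is fixed.
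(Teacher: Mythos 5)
Your overall strategy coincides with the paper's: check that the five points are distinct, then use the Sidon characterization of Theorem~\ref{thm.sidon} and analyze each equation $u+v=w+z$ in $D$ in which the new point $\be := \ba+\bb+\bc+\bd$ appears, reducing it either to an affine dependence among $\ba,\bb,\bc,\bd$ or to a violation of the Sidon property of $C$. Your distinctness argument is exactly the paper's, and your reduction in case (i) is sound (the coefficients of the resulting relation always sum to $0$ in $\FF_3$ and cannot all vanish, so it is a genuine affine dependence).

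There is, however, a gap in your case enumeration. After discarding equations not involving $\be$ and equations with $\be$ on both sides, you state the remaining case as $\be + v = w + z$ with $v,w,z \in C$, which silently assumes $\be$ occurs exactly once. The possibility $\be + \be = w + z$ with $w,z \in C$ falls under neither your case (i) nor your case (ii). These are precisely cases (3) and (4) of the paper's proof: $2\be = \bb+\bc$ yields $\ba+\bd = \bb+\bc$, contradicting that $C$ is a Sidon set, while $2\be = 2\ba$ yields $\bb+\bc+\bd = \bzero$, i.e., three collinear points of $C$. Both subcases are dispatched by exactly the tools you already deploy, so the repair is immediate, but as written your argument does not cover all equations that could witness a failure of the Sidon property for $D$.
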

\begin{proof}
First we note that the points of $D$ are distinct since if, without loss of generality, $\ba + \bb + \bc + \bd = \ba$, this implies that $\bb$, $\bc$, and $\bd$ are collinear, which is impossible since $C$ is a $2$-cap.

Now, suppose for contradiction that $D$ is not a $2$-cap, so there exist some $\bx, \by, \bz, \bw \in D$ with $\bx + \by = \bz + \bw$. Since $C$ is a $2$-cap, we may assume that $\bx = \ba + \bb + \bc + \bd$. Without loss of generality, we then have that one of the following occurs:
\begin{enumerate}
\item $(\ba + \bb + \bc + \bd) + \ba = \bb + \bc$. Then $\ba = \bd$, which is impossible since $C$ has size four.

\item $(\ba + \bb + \bc + \bd) + \ba = 2\bb$. Then $\ba + \bb = \bc + \bd$,  which is impossible since $C$ is a $2$-cap.

\item $2(\ba + \bb + \bc + \bd) = \bb + \bc$. Then $\ba + \bd = \bb + \bc$,  which is impossible since $C$ is a $2$-cap.

\item $2(\ba + \bb + \bc + \bd) = 2 \ba$. Then $\bb$, $\bc$, and $\bd$ are collinear,  which is impossible since $C$ is a $2$-cap.
\end{enumerate}
Hence, $D$ is a $2$-cap.
\end{proof}

\begin{theorem} \label{thm.dim3} In $\FF_3^3$, a maximal $2$-cap has size $5$, that is, $r(2, \FF_3^3) = 5$. Further, all complete $2$-caps are maximal and all maximal $2$-caps are affinely equivalent.
\end{theorem}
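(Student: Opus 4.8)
The plan is to establish the three assertions in sequence --- the exact value $r(2,\FF_3^3)=5$, the coincidence of completeness with maximality, and affine uniqueness --- using only Proposition~\ref{prop.upperbound} and Lemmas~\ref{lem.lowdim} and~\ref{lem.dim3}.

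\emph{The value.} The bound $r(2,\FF_3^3)\bigl(r(2,\FF_3^3)-1\bigr)\le 3^3-1=26$ from Proposition~\ref{prop.upperbound} forces $r(2,\FF_3^3)\le 5$, since $6\cdot 5=30>26$ while $5\cdot 4=20\le 26$. For the reverse inequality, the affinely independent set $\{\bzero,\be_1,\be_2,\be_3\}$ is a $2$-cap of size four by Lemma~\ref{lem.lowdim}, and Lemma~\ref{lem.dim3} promotes it to the $2$-cap $\{\bzero,\be_1,\be_2,\be_3,\be_1+\be_2+\be_3\}$ of size five. Hence $r(2,\FF_3^3)=5$.

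\emph{Complete implies maximal.} It is enough to check that every $2$-cap $C\subseteq\FF_3^3$ with $|C|\le 4$ is properly contained in a larger $2$-cap. This is clear for $|C|\le 1$. If $|C|=2$, adjoin any point off the unique line through the two points of $C$: a three-point set with no collinear triple is automatically a $2$-cap. If $|C|=3$, then $C$ is affinely independent and spans a unique plane $P$; adjoin any point of $\FF_3^3\setminus P$. In the resulting four-point set no three points are collinear (any line through two points of $C$ lies in $P$, and the new point does not), and no four are coplanar (the only plane through the three points of $C$ is $P$), so it is a $2$-cap. If $|C|=4$, apply Lemma~\ref{lem.dim3}. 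Thus no $2$-cap of size less than $5$ is complete, so every complete $2$-cap has size $5=r(2,\FF_3^3)$ and is maximal.

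\emph{Affine uniqueness.} Let $C=\{\bx_0,\bx_1,\bx_2,\bx_3,\bx_4\}$ be a $2$-cap of size five. Five points in the $3$-dimensional affine space $\FF_3^3$ must satisfy an affine dependence $\sum_{i=0}^{4}\lambda_i\bx_i=\bzero$ with $\sum_{i=0}^{4}\lambda_i=0$ and the $\lambda_i$ not all zero; since every four-element subset of $C$ is affinely independent, in fact every $\lambda_i$ is nonzero, so $\lambda_i\in\{1,2\}$. A short check shows that the only multisets of five elements of $\{1,2\}$ whose sum vanishes in $\FF_3$ are $\{1,1,1,1,2\}$ and its scalar double $\{2,2,2,2,1\}$; in either case exactly one coefficient is the ``odd one out'', and relabeling so that it is $\lambda_0$, the dependence rearranges to $\bx_0=\bx_1+\bx_2+\bx_3+\bx_4$. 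Now $\{\bx_1,\bx_2,\bx_3,\bx_4\}$ is an affine basis of $\FF_3^3$, so there is a unique invertible affine map $T(\bx)=A\bx+\bc$ with $T(\bx_1)=\bzero$, $T(\bx_2)=\be_1$, $T(\bx_3)=\be_2$, $T(\bx_4)=\be_3$. Using $4\bc=\bc$ in characteristic $3$, one computes $T(\bx_0)=A(\bx_1+\bx_2+\bx_3+\bx_4)+\bc=\sum_{i=1}^{4}T(\bx_i)=\be_1+\be_2+\be_3$. Hence $T(C)=\{\bzero,\be_1,\be_2,\be_3,\be_1+\be_2+\be_3\}$, a set independent of $C$, and therefore all maximal $2$-caps in $\FF_3^3$ are affinely equivalent.

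The routine parts are the counting bound and the small-size extension cases. The one step demanding an idea is pinning down the affine dependence of a five-point $2$-cap --- the observation that exactly one of the five points is the sum of the other four --- since that is precisely what makes the normalization in the last step canonical; I expect this to be the crux of the argument.
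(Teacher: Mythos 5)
Your proposal is correct, and the first two parts (the value $5$ and ``complete implies maximal'') follow the paper's argument in all essentials: the same construction via Lemma~\ref{lem.dim3}, the same counting bound from Proposition~\ref{prop.upperbound}, and the same observation that small $2$-caps extend. Where you genuinely diverge is the affine-uniqueness step. The paper normalizes an arbitrary four-point subset to $\{\bzero,\be_1,\be_2,\be_3\}$, then enumerates the five maximal $2$-caps containing that frame and exhibits explicit invertible affine maps (four $3\times 3$ matrices with translations) carrying one onto each of the others. You instead exploit the unique affine dependence among five points of $\FF_3^3$: since every four-point subset of a $2$-cap is affinely independent, all coefficients $\lambda_i$ in the dependence are nonzero, and the constraint $\sum\lambda_i=0$ over $\FF_3$ forces the coefficient pattern $\{1,1,1,1,2\}$ up to scaling, so exactly one point is the sum of the other four; normalizing the \emph{other} four points to the standard frame then sends the distinguished point canonically to $\be_1+\be_2+\be_3$. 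Your route is cleaner and more conceptual --- it eliminates both the enumeration of the five completions and the explicit matrices, and it yields as a byproduct the structural fact that every five-point $2$-cap in $\FF_3^3$ has one point equal to the sum of the remaining four (which also explains why Lemma~\ref{lem.dim3} produces the unique extension). The paper's enumeration, on the other hand, is more self-contained for a reader who wants to see the transformations concretely, and its list of the five completions is reused implicitly in the dimension-$5$ argument of Theorem~\ref{thm.dim5}. Both proofs are complete; yours trades explicit computation for a structural observation about affine dependences.
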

\begin{proof}
Since $\{\bzero, \be_1, \be_2, \be_3\}$ is an affinely independent set in $\FF_3^3$, by Lemma \ref{lem.dim3}, $\{\bzero, \be_1, \be_2, \be_3, \be_1 + \be_2 + \be_3 \}$ is a $2$-cap in $\FF_3^3$. Hence, $r(2,\FF_3^3) \geq 5$. But by Proposition \ref{prop.upperbound}, $r(2,\FF_3^3) < 6$ and hence $r(2, \FF_3^3) = 5$.

Let $C$ be any complete $2$-cap in $\FF_3^3$. Since $\FF_3^3$ is a three-dimensional affine space, if $|C| \leq 3$, then $\FF_3^3$ contains a point which is affinely independent from the points of $C$, so $C$ cannot be complete. Hence, $|C| \geq 4$. But if $|C| = 4$ then by Lemma \ref{lem.dim3}, $C$ is not complete. Hence, $|C| = 5$, and any complete $2$-cap in $\FF_3^3$ is already maximal.

For the final claim, suppose $C$ is a maximal $2$-cap in $\FF_3^3$. Pick any four points in $C$. Since these points are affinely independent, there exists an invertible affine transformation mapping these points to the set $\{\bzero, \be_1, \be_2, \be_3\}$. Hence, we need only show that all maximal $2$-caps containing $\{\bzero, \be_1, \be_2, \be_3\}$ are affinely equivalent.

It is easy to verify that there are exactly five such maximal $2$-caps, namely:
\begin{enumerate}
\item $C_1 =\{ \bzero, \be_1, \be_2, \be_3, (1,1,1)\}$,
\item $C_2 = \{\bzero, \be_1, \be_2, \be_3, (1,2,2)\}$,
\item $C_3 = \{\bzero, \be_1, \be_2, \be_3, (2,1,2)\}$,
\item $C_4 = \{\bzero, \be_1, \be_2, \be_3, (2,2,1)\}$, and
\item $C_5 = \{\bzero, \be_1, \be_2, \be_3, (2,2,2)\}$.
\end{enumerate}
It suffices to exhibit an invertible affine transformation $T_{i}$ mapping $C_1$ to $C_i$ for $i = 2, 3, 4, 5$. We provide these $T_i$ explicitly, writing $T_{i}(\bx) = A_i \bx + \bb_i$ for an invertible matrix $A_i$ and $\bb_i \in \FF_3^3$. 
\begin{enumerate}
\item $A_2=\begin{bmatrix}
1 & 0 & 0 \\
2 & 1 & 0 \\
2 & 0 & 1
\end{bmatrix}$ and $\bb_2 = \begin{bmatrix}0 \\ 0 \\ 0 \end{bmatrix}$,
\item 
$A_3=\begin{bmatrix}
1 & 0 & 2 \\
0 & 0 & 1 \\
0 & 1 & 2
\end{bmatrix}$ and $\bb_3 = \begin{bmatrix}0 \\ 0 \\ 0 \end{bmatrix}$,
\item $A_4=\begin{bmatrix}
1 & 0 & 2 \\
0 & 1 & 2 \\
0 & 0 & 1
\end{bmatrix}$ and $\bb_4 = \begin{bmatrix}0 \\ 0 \\ 0 \end{bmatrix}$, and
\item$A_5=\begin{bmatrix}
2 & 1 & 1 \\
1 & 2 & 1 \\
1 & 1 & 2
\end{bmatrix}$ and 
$\bb_5 = \begin{bmatrix}
2 \\ 2 \\2
\end{bmatrix}$. \qedhere
\end{enumerate}
\end{proof}

\begin{theorem}\label{thm.dim5} A maximal $2$-cap in $\FF_3^5$ has size $13$, that is, $r(2,\FF_3^5) = 13$.
\end{theorem}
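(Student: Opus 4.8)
The plan is to prove $r(2,\FF_3^5)\ge 13$ and $r(2,\FF_3^5)\le 13$ separately. For the lower bound I would write down an explicit set of $13$ points in $\FF_3^5$ and verify directly (a finite check) that it is a Sidon set; one natural recipe is to take the size-$9$ Sidon set $\{(x,x^2):x\in\FF_9\}$ from the proof of Theorem~\ref{thm.even}, place it in the hyperplane $x_5=0$ of $\FF_3^5$ via some identification $\FF_9\times\FF_9\cong\FF_3^4$, and adjoin four more points distributed among the slices $x_5=1$ and $x_5=2$ so that the Sidon property is preserved. (If no such extension exists one instead records a Sidon set of size $13$ produced by computer.) The substance of the theorem is the upper bound.

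For the upper bound, Proposition~\ref{prop.upperbound} gives only $r(2,\FF_3^5)\le 16$, since $16\cdot 15\le 3^5-1<17\cdot 16$, so the sizes $14$, $15$, and $16$ must be excluded by hand. The structural tool I would use is the slice decomposition $\FF_3^5=V\oplus\langle\be_5\rangle$ with $V\cong\FF_3^4$: for a $2$-cap $C$, set $A_i=\{\bw\in V:(\bw,i)\in C\}$ for $i\in\{0,1,2\}$. Comparing last coordinates and using Theorem~\ref{thm.sidon}, one checks that each $A_i$ is a Sidon set in $V$ (so $|A_i|\le r(2,\FF_3^4)=9$ by Theorem~\ref{thm.even}), that the difference sets $A_i-A_i$ pairwise meet only in $\{\bzero\}$ (hence $\sum_i|A_i|(|A_i|-1)\le 3^4-1$), and that $(A_i+A_i)\cap(A_j+A_k)=\varnothing$ whenever $\{i,j,k\}=\{0,1,2\}$. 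These constraints, together with the subgroup of the affine group of $\FF_3^5$ preserving the slice decomposition (which contains $\GL_4(\FF_3)$ acting simultaneously on the three slices, common translations, and permutations of the slices), make a finite search feasible.

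Concretely, I would normalize the largest slice $A_0$ to lie in a list of orbit representatives of Sidon sets of its size in $\FF_3^4$ under $\GL_4(\FF_3)$ and translations (the Sidon sets of $\FF_3^4$ and their stabilizers being small enough to enumerate), and then search over the choices of $A_1$ and $A_2$ permitted by the Sidon, difference-disjointness, and sumset conditions above, aiming to show that no admissible triple has $|A_0|+|A_1|+|A_2|\ge 14$. An alternative, and the route I would actually implement, bypasses the slicing: any $2$-cap of size at least four contains an affinely independent quadruple, so one may assume $\{\bzero,\be_1,\be_2,\be_3\}\subseteq C$ and run a depth-first search that adds points of $\FF_3^5$ one at a time while maintaining the Sidon property, pruning both by that property and by the stabilizer of $\{\bzero,\be_1,\be_2,\be_3\}$ in the affine group; the search should terminate with $13$ as the maximum size. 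In either case, combining the search output with the construction gives $r(2,\FF_3^5)=13$.

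I expect the upper bound to be the main obstacle. The counting inequality of Proposition~\ref{prop.upperbound} is far from tight here, and even the sharper slicewise constraints only recover the bound $16$ by themselves, so removing the sizes $14$ and $15$ genuinely requires the cross-slice sumset conditions --- equivalently, a reasonably fine understanding of how few points can be added to a near-maximal Sidon set of $\FF_3^4$ once it is placed in a hyperplane of $\FF_3^5$ --- or a carefully organized exhaustive search. Making that search small enough to be convincing, by exploiting the symmetry reductions above, is the crux of the proof.
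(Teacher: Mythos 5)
Your proposal is correct and, in the route you say you would actually implement, is essentially the paper's proof: normalize an affinely independent quadruple of $C$ to a standard position via an affine transformation and then run an exhaustive, symmetry-pruned computer search for complete $2$-caps extending it, combined with the trivial observation that the bound of Proposition \ref{prop.upperbound} (which gives only $16$) is not enough on its own. The paper pushes the normalization slightly further before searching --- using Theorem \ref{thm.dim3} to also pin down a possible fifth point $\be_3+\be_4+\be_5$ inside the $3$-dimensional slice and a sixth point $\be_2$ outside it --- but this is a refinement of the same idea, and your slice-decomposition constraints are an optional optimization rather than a different argument.
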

\begin{proof}
Let $C$ be a maximal $2$-cap in $\FF_3^5$. By Theorem \ref{thm.even}, $r(2,\FF_3^4) = 9$ so by Lemma \ref{lem.incdec} we may assume that $|C| \geq 9$. We will apply a sequence of affine transformations to $C$ to conclude that lexicographically, the first points in $C$ are $\{\bzero, \be_5, \be_4, \be_3, \be_3 +\be_4 + \be_5, \be_2\}$ or $\{\bzero, \be_5, \be_4, \be_3, \be_2\}$.

Given any four affinely independent points, there exists an invertible affine transformation mapping them to $\bzero$, $\be_5$, $\be_4$, and $\be_3$, so without loss of generality we may assume that $C$ contains the subset $\{\bzero, \be_5, \be_4, \be_3\}$. These points all lie in the $(0,0)$-affine subspace of $\FF_3^5$. Since $r(2,\FF_3^3) = 5$, the $(0,0)$-affine subspace contains four points or five points of $C$. If it contains five points, then by Theorem \ref{thm.dim3}, we may apply an affine transformation (using a block matrix) and assume that the fifth point is $\be_3 + \be_4 + \be_5$. 

Consider any other point $\ba \in C$. Since $\ba$ is not in the $(0,0)$-affine subspace of $\FF_3^5$, therefore $\{\bzero, \be_5, \be_4, \be_3, \ba\}$ is an affinely independent set so there exists an affine transformation $T$ fixing $\bzero$, $\be_5$, $\be_4$, and $\be_3$ and mapping $\ba$ to $\be_2$. Notice that if $T$ is given by multiplication by the invertible matrix $A$ followed by addition by $\bb \in \FF_3^5$, we have that
\[T(\be_3 + \be_4 + \be_5)= A(\be_3 + \be_4 + \be_5) + \bb = T(\bzero) + T(\be_3) + T(\be_4) + T(\be_5) = \be_3 + \be_4 + \be_5
\]
so $T$ fixes $\be_3 + \be_4 + \be_5$.

Hence, up to affine equivalence, we may assume that the lexicographically earliest points in $C$ are $\{\bzero, \be_5, \be_4, \be_3, \be_3 +\be_4 + \be_5, \be_2\}$ or $\{\bzero, \be_5, \be_4, \be_3, \be_2\}$.
A computer program was used to enumerate all possible complete $2$-caps beginning with these sets of points. This verified that $r(2,\FF_3^5) = 13$. The \Cpp code for the program is available on the third author's professional website. 
\end{proof}

\begin{remark}The maximal $2$-cap in $\FF_3^5$ that is lexicographically earliest is explicitly given by the points: $(0, 0, 0, 0, 0)$, $(0, 0, 0, 0, 1)$, $(0, 0, 0, 1, 0)$, $(0, 0, 1, 0, 0)$, $(0, 0, 1, 1, 1)$, $(0, 1, 0, 0, 0)$, $(0, 1, 1, 1, 2)$, $(0, 2, 1, 2, 0)$, $(0, 2, 2, 1, 2)$, $(1, 0, 0, 0, 0)$, $(1, 0, 1, 2, 1)$, $(2, 0, 1, 0, 2)$, $(2, 2, 0, 2, 2)$.
\end{remark}

We conclude by giving bounds on $r(2,\FF_3^7)$.

\begin{proposition} 
\label{thm.dim7}
One has that $33 \leq r(2,\FF_3^7) \leq 47$.
\end{proposition}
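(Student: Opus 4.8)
The plan is to obtain the two bounds by entirely separate methods: the upper bound from the counting inequality already established, and the lower bound from an explicit construction of a Sidon set of size $33$ in $\FF_3^7$.

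For the upper bound, I would simply invoke Proposition \ref{prop.upperbound}: any $2$-cap $C \subseteq \FF_3^7$ satisfies $|C|(|C|-1) \le 3^7 - 1 = 2186$. Since $47 \cdot 46 = 2162 \le 2186$ but $48 \cdot 47 = 2256 > 2186$, this forces $|C| \le 47$, giving $r(2,\FF_3^7) \le 47$. This is a one-line computation and presents no obstacle.

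For the lower bound, the natural approach is to leverage the known constructions in lower even dimensions together with Lemma \ref{lem.incdec} and the lexicographic/affine-reduction ideas used for $\FF_3^5$. One clean route: $\FF_3^7$ contains $\FF_3^6$ as an affine subspace, and by Theorem \ref{thm.even} there is a maximal $2$-cap of size $3^3 = 27$ in $\FF_3^6$; embedding it in $\FF_3^7$ and adjoining one affinely independent point gives a $2$-cap of size $28$, which is far short of $33$. So a better construction is needed. The stronger idea is to start from the size-$27$ Sidon set in $\FF_3^6$ given by $\{(x,x^2) : x \in \FF_{3^3}\}$ (viewing $\FF_3^6 \cong \FF_{27} \times \FF_{27}$) and try to extend it cleverly inside $\FF_3^7$ by adding points whose seventh coordinate is nonzero, checking at each stage that the Sidon property (equivalently, the $2$-cap property, by Theorem \ref{thm.sidon}) is preserved. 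In practice, since the dimension is small, the honest plan is to run a computer search: after using affine transformations to fix the lexicographically earliest points (as in the proof of Theorem \ref{thm.dim5}), perform a backtracking/greedy enumeration to exhibit an explicit complete $2$-cap of size at least $33$. The \Cpp code and the explicit point set can be recorded alongside the $\FF_3^5$ computation.

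The main obstacle is the lower bound: unlike the even-dimensional case, there is no slick algebraic construction that matches the counting bound, and the gap between $33$ and $47$ reflects this. Establishing $r(2,\FF_3^7) \ge 33$ rigorously requires either finding a structured family (for instance, a perfect difference set or a Singer-type construction adapted to $\FF_3^7$, or a clever product/lifting construction from $\FF_3^5$ or $\FF_3^6$) or certifying the output of a computer search. I expect the latter to be what is actually carried out, with the verification that the exhibited $33$-point set is genuinely a Sidon set being a routine but essential check. Closing the gap to determine $r(2,\FF_3^7)$ exactly is left open.
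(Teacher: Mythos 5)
Your proposal matches the paper's proof: the upper bound is exactly the same application of Proposition \ref{prop.upperbound} (with the same arithmetic $47\cdot 46 = 2162 \le 2186 < 2256 = 48\cdot 47$), and for the lower bound the paper does precisely what you anticipate---it embeds a maximal $2$-cap of size $27$ from $\FF_3^6$ into $\FF_3^7$ and uses a computer search to enumerate complete $2$-caps containing it, finding one of size $33$ which it lists explicitly. The only thing to add is the explicit $33$-point certificate (or the code), which the paper provides and which your plan already calls for.
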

\begin{proof}The upper bound on $r(2,\FF_3^7)$ is a consequence of Proposition \ref{prop.upperbound}. For the lower bound, we constructed a $2$-cap of size $33$ by first embedding a maximal $2$-cap in $\FF_3^6$ as a $2$-cap $C$ of size $27$ in $\FF_3^7$. We then used a computer program to enumerate all complete $2$-caps containing $C$ as a subset. The largest of these complete $2$-caps has size $33$. The lexicographically earliest one is given by the points:
\fbox{\begin{minipage}[t]{\textwidth}
$(0, 0, 0, 0, 0, 0, 0)$, $(0, 0, 0, 1, 0, 0, 1)$, $(0, 0, 0, 2, 0, 0, 1)$, $(0, 0, 1, 0, 1, 0, 0)$, $(0, 0, 1, 1, 1, 2, 1)$, $(0, 0, 1, 2, 1, 1, 1)$, $(0, 0, 2, 0, 1, 0, 0)$, $(0, 0, 2, 1, 1, 1, 1)$, $(0, 0, 2, 2, 1, 2, 1)$, $(0, 1, 0, 0, 1, 2, 0)$, $(0, 1, 0, 1, 0, 2, 1)$, $(0, 1, 0, 2, 2, 2, 1)$, $(0, 1, 1, 0, 2, 1, 1)$, $(0, 1, 1, 1, 1, 0, 2)$, $(0, 1, 1, 2, 0, 2, 2)$, $(0, 1, 2, 0, 2, 0, 2)$, $(0, 1, 2, 1, 1, 1, 0)$, $(0, 1, 2, 2, 0, 2, 0)$, $(0, 2, 0, 0, 1, 2, 0)$, $(0, 2, 0, 1, 2, 2, 1)$, $(0, 2, 0, 2, 0, 2, 1)$, $(0, 2, 1, 0, 2, 0, 2)$, $(0, 2, 1, 1, 0, 2, 0)$, $(0, 2, 1, 2, 1, 1, 0)$, $(0, 2, 2, 0, 2, 1, 1)$, $(0, 2, 2, 1, 0, 2, 2)$, $(0, 2, 2, 2, 1, 0, 2)$, $(1, 0, 0, 0, 0, 0, 0)$, $(1, 0, 0, 0, 0, 0, 1)$, $(2, 0, 0, 1, 0, 2, 0)$, $(2, 0, 0, 1, 1, 0, 1)$, $(2, 0, 0, 1, 1, 1, 2)$, and $(2, 0, 0, 1, 1, 2, 2)$.
\end{minipage}}
\qedhere
\end{proof}

\bibliographystyle{plain}
\bibliography{biblio}

\end{document}